\theoremstyle{definition}
\newtheorem{theorem}{Theorem}[section]
\newtheorem{proposition}{Proposition}[section]
\DeclareMathOperator{\Av}{Av}
\newcommand{\av}[1]{\Av\!\left(#1\right)}
\newcommand{\avh}[1]{\Av_H\!\left(#1\right)}
\newcommand{\avu}[1]{\Av_U\!\left(#1\right)}
\DeclareMathOperator{\Co}{Co}
\newcommand{\co}[1]{\Co\!\left(#1\right)}
\newcommand{\cou}[1]{\Co_U\!\left(#1\right)}
\DeclareMathOperator{\Split}{s}
\renewcommand{\split}[1]{\Split\!\left(#1\right)}
\newcommand{\M}{\mathcal{M}}
\newcommand{\leftp}{\ell}
\newcommand{\rightp}{r}
\DeclareMathOperator{\MinCo}{MinCo}
\newcommand{\minco}[1]{\MinCo\!\left(#1\right)}
\newcommand{\MP}{\mathcal{MP}}
\def\ps@pprintTitle{%
  \let\@oddhead\@empty
  \let\@evenhead\@empty
  \def\@oddfoot{\reset@font\hfil\thepage\hfil}
  \let\@evenfoot\@oddfoot
}
\begin{document}

\begin{frontmatter}

\title{On the generating functions of pattern-avoiding Motzkin paths\tnoteref{mytitlenote}}
\tnotetext[mytitlenote]{Research partially supported by grant 207178-052 from the Icelandic Research Fund. A.B.~and L.F.~are members of the INdAM research group GNCS. They are partially supported by the 2020-2021 INdAM-GNCS project: ``Combinatoria delle permutazioni, delle parole e dei grafi: algoritmi e applicazioni"}

\author{Christian Bean}
\address{School of Computer Science, Reykjavik University, Reykjavik, Iceland}
\ead{christianbean@ru.is}
\author{Antonio Bernini}
\ead{antonio.bernini@unifi.it}
\author{Luca Ferrari}
\ead{luca.ferrari@unifi.it}
\address{Dipartimento di Matematica e Informatica, Universit\'a di Firenze, Firenze, Italy}
\author{Matteo Cervetti}
\ead{matteocervetti27@gmail.com}
\address{LIB, Universit\'e de Bourgogne Franche-Comt\'e, Dijon, France}




\begin{abstract}

Using a recursive approach, we show that the generating function for sets of Motzkin paths avoiding a single (not necessarily consecutive) pattern 
is rational over $x$ and the Catalan generating function $C(x) = \frac{1-\sqrt{1-4x^2}}{2x^2}$, where $x$ keeps track of the length of the path. Moreover, an algorithm is
provided for finding the generating function in the more general case of an arbitrary set of patterns.
In addition, this algorithm allows us to find a combinatorial specification for pattern-avoiding Motzkin paths,
which can be used not only for enumeration, but also for exhaustive and random generation.
\end{abstract}

\begin{keyword}
lattice path \sep generating function \sep Motzkin path \sep pattern \sep combinatorial specification 
\MSC[2020] 05A10, 05A15
\end{keyword}

\end{frontmatter}


\section{Introduction}
\label{sec:introduction}

A \emph{Motzkin path} of length~$n$ is a lattice path starting at~$(0, 0)$ and
ending at~$(n, 0)$ consisting of \emph{up steps} ($U = (1, 1)$), \emph{down steps} ($D = (1, -1)$) and \emph{horizontal steps} ($H = (1,
0)$) that never goes below the~$x$-axis. We represent Motzkin paths as words over
the alphabet~$\{U, D, H\}$.~In Figure~\ref{fig:motzkinpath}, the Motzkin path
$UHUUDHDD$ is shown.

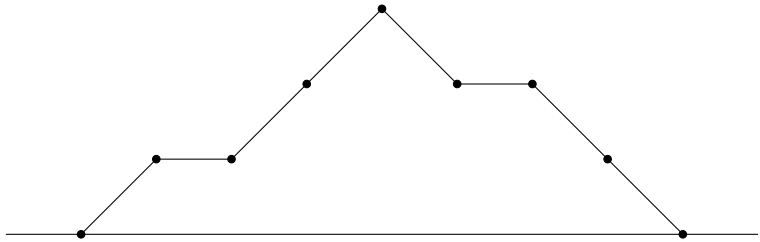
\begin{figure}[h]
    \centering
    \label{fig:motzkinpath}
    \begin{tikzpicture}
        \draw (-1, 0) -- (9, 0);

        \draw (0, 0) -- (1, 1);
        \draw (1, 1) -- (2, 1);
        \draw (2, 1) -- (3, 2);
        \draw (3, 2) -- (4, 3); 
        \draw (4, 3) -- (5, 2); 
        \draw (5, 2) -- (6, 2); 
        \draw (6, 2) -- (7, 1); 
        \draw (7, 1) -- (8, 0); 

        \filldraw (0, 0) circle (0.05);
        \filldraw (1, 1) circle (0.05);
        \filldraw (2, 1) circle (0.05);
        \filldraw (3, 2) circle (0.05);
        \filldraw (4, 3) circle (0.05); 
        \filldraw (5, 2) circle (0.05); 
        \filldraw (6, 2) circle (0.05); 
        \filldraw (7, 1) circle (0.05);
        \filldraw (8, 0) circle (0.05);
    \end{tikzpicture}
    \caption{The Motzkin path $UHUUDHDD$.}
\end{figure}

Let~$\M$ be the set of all Motzkin paths,~$\M_H$ be the set of Motzkin paths
that start with a horizontal step, and~$\M_U$ be the set of Motzkin paths
starting with an up step. With this we have
\begin{equation}\label{eq:motzkinconstruction}
    \M = \{\epsilon\} \sqcup \M_H \sqcup \M_U,
\end{equation}
where $\epsilon$ represents the empty path and we use $\sqcup$ to denote
disjoint union. The ``folklore" result (see for example Donaghey and
Shapiro~\cite{donagheyshapiro:motzkin}) on Motzkin paths says that every Motzkin
path in $\M_H$ can be written as $Hw$ for some $w$ in $\M$, and every Motzkin
path in $\M_U$ can be written $UxDy$ for some $x, y$ in $\M$ as shown in
Figure~\ref{fig:motzkinconstruction}.

\begin{figure}[h]
    \centering
    \label{fig:motzkinconstruction}
    \begin{tikzpicture}
        
        \draw (0, 0) -- (1, 0);
        \draw[dashed] (1, 0) -- node[above] {$\mathcal{M}$} (3, 0);
        \filldraw (0, 0) circle (0.05);
        \filldraw (1, 0) circle (0.05);

        \draw (4, 0) -- (5, 1);
        \filldraw (4, 0) circle (0.05);
        \filldraw (5, 1) circle (0.05);
        \draw[dashed] (5, 1) -- node[above] {$\mathcal{M}$} (7, 1);
        \draw (7, 1) -- (8, 0);
        \filldraw (7, 1) circle (0.05);
        \filldraw (8, 0) circle (0.05);
        \draw[dashed] (8, 0) -- node[above] {$\mathcal{M}$} (10, 0);

        \node (LHS) at (-2, 0.2) {$\mathcal{M}$};
        \node (equal) at (-1.4, 0.2) {$=$};
        \node (epsilon) at (-1.0, 0.2) {$\epsilon$};
        \node (union1) at (-0.5, 0.2) {$\sqcup$};
        \node (union2) at (3.5, 0.2) {$\sqcup$};

    \end{tikzpicture}
    \caption{A pictorial representation of the structural decomposition of Motzkin paths.}
\end{figure}
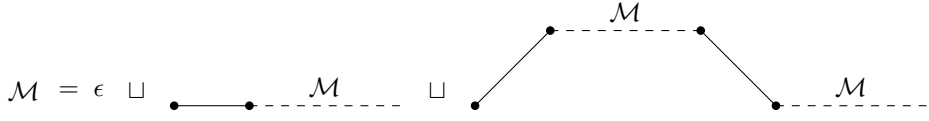

If we let $m_n$ be the number of length $n$ Motzkin paths then it follows that
$m_n$ satisfies the recurrence relation
\begin{equation*}
    m_0 = m_1 = 1,
    \quad
    m_n = m_{n-1} + \sum_{i = 0}^{n - 2} m_{i} m_{n - i - 2}
\end{equation*}
and moreover if we let $M(x) = \sum_{n\geq0} m_n x^n$ be the generating
function, then $M(x)$ satisfies the minimal polynomial
\begin{equation*}
    \label{eq:minpolymotzkin}
    1 + (x - 1) M(x) + x^2 M(x)^2 = 0.
\end{equation*}

Akin to investigations for other combinatorial structures 
(eminently for permutations, but also for graphs), 
there has been interest in studying properties related to the notion of \emph{patterns}
in the context of lattice paths.
For paths it has been common to consider a pattern as a sequence of \emph{contiguous} letters, 
see for instance Asinowski et al.~\cite{asinowskietal:kernel} and 
Sapounakis et al.~\cite{sapounakisetal:string}, 
to cite just a couple of references.
This may be due to the fact that, as we have also remarked above,
the set of lattice paths of a certain type can be conveniently seen as a formal language,
and it is common in theoretical computer science to study contiguous patterns,
or \emph{factors}, of the words of a formal language
(e.g.~\emph{pattern matching} and related problems).
In this work, however,
we will deal with a notion of pattern that is closer to the one usually studied for permutations,
namely we will consider a pattern as a \emph{subword}
(whose letters are not necessarily contiguous) of a given word.
Below we give the necessary notations and definitions in the specific case of Motzkin paths.

A Motzkin path $p$ \emph{contains} a \emph{pattern} $q$ in $\{U, H, D\}^*$,
written $q \preceq p$, if $q$ occurs as a subword in $p$.~If $p$ does not
contain $q$ we say $p$ \emph{avoids} $q$ and write $q \npreceq p$.~For a set $P$
of patterns, we say a path \emph{avoids} $P$ if it avoids all $q \in P$ and
define the set of Motzkin paths avoiding $P$ as
\begin{equation*}
    \av{P} = \{p \in \M ~|~ p ~ \text{avoids} ~ P\}.
\end{equation*}
If a path does not avoid $P$ we say it \emph{contains} $P$ and define the set of
Motzkin paths containing $P$ as
\begin{equation*}
    \co{P} = \{p \in \M ~|~ p ~ \text{contains} ~ P\},
\end{equation*}
i.e.~the set of Motzkin paths containing at least one pattern in $P$.~For instance, 
consider again the Motzkin path in Figure~\ref{fig:motzkinpath}. 
It is easy to check that it contains the pattern $UHDH$, 
but it avoids the pattern $UDHH$.~Notice that, according to our definition of containment for sets of patterns, such a path also contains the set $P=\{ UHDH,UDHH\}$.

The set $\av{H}$\footnote{We will not include the braces in our notation,
i.e.~we write $\av{H}$ rather than $\av{\{H\}}$} is the set of Dyck paths which
are counted by the Catalan numbers.
Recall that Dyck paths are defined like Motzkin paths, except that they do not use horizontal steps.
In Bacher et al.~\cite{bacheretal:dyck}, it was shown that any set of
Dyck paths avoiding a single pattern has a rational generating function. In
this paper, we show a similar statement holds for the set of Motzkin paths,
alongside an algorithm for effectively computing the generating functions even in the more general case of a set of patterns.

\begin{theorem}
    \label{thm:motzkinarealmostrational}
    Let $q$ be a fixed pattern and let $a_n$ be the number of $q$-avoiding Motzkin paths of length $n$.
    Then the generating function $\Delta_q (x)= \sum_{n\geq 0} a_n
    x^n$ is rational over $x$ and $C(x) = \sum_{n\geq 0} C_n
    x^{2n}$, where $C_n$ is the $n$-th Catalan number.
\end{theorem}

The paper is organised as follows. In Section~\ref{sec:algorithm}, we
outline an algorithm for computing a \emph{combinatorial specification}, in the sense of Flajolet and Sedgewick~\cite{flajoletsedgewick:ac}, for
sets of Motzkin paths avoiding an arbitrary set of patterns. Such a
specification then gives a method for computing the generating function but also the
ability to sample uniformly from these sets. In Section~\ref{sec:proof}, we
give a proof of Theorem~\ref{thm:motzkinarealmostrational}. 
The strategy of the proof consists of describing a recursive procedure to compute the generating function $\Delta_q (x)$;
such a procedure also depends on certain auxiliary generating functions,
which are in turn described in a recursive fashion. 
Finally, in Section \ref{sec:conclusion} some suggestions for further research are given.

In closing this Introduction, we remark that, whenever we will consider Dyck paths, 
we will usually enumerate them according to the length, 
rather than (as it is usual) the semilength.
As a consequence, our version of the Catalan generating function is 
$C(x) = \frac{1-\sqrt{1-4x^2}}{2x^2}$, 
hence, for all $n\in \mathbb{N}$, $[x^{2n}]C(x)=C_{n} = \frac{1}{n+1} \binom{2n}{n}$ (the $n$-th Catalan number)
and $[x^{2n+1}]C(x)=0$ (where $[x^n]F(x)$ denotes the $n$-the coefficient of the generating function $F(x)$).
%
%
%

\section{The algorithm}
\label{sec:algorithm}

For a set $P$ of patterns, we define $\avh{P}$ to be the Motzkin paths avoiding $P$ and beginning with an $H$ step,
\begin{equation*}
    \avh{P} = \av{P} \cap \M_H,
\end{equation*}
and $\avu{P}$ to be the Motzkin paths avoiding $P$ and beginning with a $U$ step,
\begin{equation*}
    \avu{P} = \av{P} \cap \M_U.
\end{equation*}
The set of Motzkin paths avoiding $P$ can be partitioned in the same manner as the set of
all Motzkin paths,
\begin{equation}\label{eq:avodingdecomposition}
    \av{P} = \{\epsilon\} \sqcup \avh{P} \sqcup \avu{P}.
\end{equation}

\subsection{Starting with H}

Using the following, we can enumerate the set $\avh{P}$.

\begin{theorem}
\label{thm:startwithH}
For a set of patterns $P$, let $P_U$, $P_D$, and $P_H$ be the sets of patterns in $P$ beginning with $U$, $D$ and $H$, respectively, and
\begin{equation*}
    P' = P_U \cup P_D \cup \{p ~ | ~ Hp \in P_H \}
\end{equation*}
then
\begin{equation*}
    \avh{P} = \{ Hp ~ | ~ p \in \av{P'} \}.
\end{equation*}
\end{theorem}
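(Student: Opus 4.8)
The plan is to reduce the claim to a statement about how a pattern can occur in a word obtained by prepending a single $H$, and then to settle that statement by a short case analysis on the first letter of the pattern. First I would invoke the $\M_H$ part of the Donaghey--Shapiro decomposition recalled in Section~\ref{sec:introduction}: deleting the initial step is a bijection $\M_H \leftrightarrow \M$, so every element of $\M_H$ is uniquely $Hw$ with $w \in \M$, and conversely $Hw \in \M_H$ for every $w \in \M$. Hence $\avh{P} = \{\, Hw \mid w \in \M,\ Hw \text{ avoids } P \,\}$, and it suffices to show that for every $w \in \M$,
\begin{equation*}
  Hw \text{ avoids } P \quad\Longleftrightarrow\quad w \text{ avoids } P'.
\end{equation*}
(We assume throughout that the patterns in $P$ are nonempty, as is implicit in the decomposition $P = P_U \sqcup P_D \sqcup P_H$.)

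The heart of the argument is the following observation, which I would prove for an arbitrary nonempty pattern $q$ and an arbitrary word $v$ on $\{U,H,D\}$. If $q$ does not begin with $H$, then $Hv$ contains $q$ if and only if $v$ contains $q$: one direction is immediate, and for the other, since the first letter of $q$ is not $H$, no occurrence of $q$ as a subword of $Hv$ can use the leading $H$, so every such occurrence lies inside $v$. If instead $q = Hq'$, then $Hv$ contains $q$ if and only if $v$ contains $q'$: if $v$ contains $q'$ we prepend the leading $H$ of $Hv$ to obtain an occurrence of $Hq' = q$; conversely, given an occurrence of $q$ in $Hv$, either it uses the leading $H$, in which case its remaining letters witness $q' \preceq v$, or it does not, in which case $q \preceq v$ and therefore also $q' \preceq q \preceq v$ since deleting the first letter of a word produces a subword of it.

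Finally I would conjoin these equivalences over all of $P$. Writing $P = P_U \sqcup P_D \sqcup P_H$, a path $Hw$ avoids $P$ exactly when it avoids every $q \in P$; by the first case above this is equivalent, for $q \in P_U \cup P_D$, to ``$w$ avoids $q$'', and by the second case, for $q = Hp \in P_H$, to ``$w$ avoids $p$''. Taking the conjunction, $Hw$ avoids $P$ if and only if $w$ avoids $P_U \cup P_D \cup \{\, p \mid Hp \in P_H \,\} = P'$ (the $\sqcup$ in the definition of $P'$ being immaterial for avoidance even if the three parts overlap), and substituting this into the reduction above yields $\avh{P} = \{\, Hp \mid p \in \av{P'} \,\}$. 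The argument is otherwise routine; the only point requiring care is the case $q = Hq'$, where one must both split an occurrence according to whether it uses the prepended $H$ and notice that avoiding the tail $q'$ is not merely necessary but \emph{sufficient} for $Hw$ to avoid $Hq'$ — this last step relying on transitivity of $\preceq$ together with $q' \preceq q$.
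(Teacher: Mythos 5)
Your proposal is correct and follows essentially the same route as the paper: both arguments come down to analyzing whether an occurrence of a pattern in $Hq$ uses the prepended $H$ step, splitting on whether the pattern begins with $H$. Your write-up is somewhat more careful than the paper's (you state the biconditional lemma explicitly and justify the converse direction for patterns $Hp \in P_H$ via the subword relation $p \preceq Hp$, a point the paper's proof passes over quickly), but the underlying idea is identical.
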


\begin{proof}
    Let $Hq$ be a path in $\avh{P}$.~It follows that $q$ must avoid every pattern in $P_U$ and $P_D$.~If $q$ contains an occurrence of some $p$ for $Hp \in P_H$, then $Hq$ must contain an occurrence of $Hp$.~Therefore, $q$ must avoid $p$. 

    On the contrary, let $Hq \in \{ Hp ~ | ~ p \in \av{P'} \}$, then as $q$ avoids $P_U$ and $P_D$, it follows that $Hq$ also avoids $P_U$ and $P_D$.~As $q$ avoids all $p$ coming from $Hp \in P_H$, the path $Hq$ must avoid $Hp$.~Hence we have shown that $Hq$ avoids $P$.  
\end{proof}

If $a_n$ counts the number of length $n$ paths in $\avh{P}$ and $b_n$ counts the
number of length $n$ paths in $\av{P'}$ then by the above Theorem we get $a_k = b_{k-1}$
for all $k \geq 1$.

\subsection{Starting with U}

Every Motzkin path in $\avu{P}$ can be written as $UxDy$ for some $x, y$ in
$\av{P}$, however not each choice of $x$ and $y$ from $\av{P}$ yields a Motzkin path in $\avu{P}$.~For example, if $P
= \{HH\}$, and $x = H$ and $y = H$, then we get the path $UHDH$ which contains an
occurrence of $HH$.~In order to capture this, we introduce the notion of
\emph{crossing patterns}.

A Motzkin path $p \in \M_U$ \emph{contains} the \emph{crossing pattern} $\leftp
- \rightp$, where $\leftp$ and $\rightp$ are words over the alphabet $\{U, D,
H\}$, if $p$ can be written $UxDy$ where $UxD$ contains $\leftp$ and $y$
contains $\rightp$.~Otherwise, we say it \emph{avoids} $\leftp - \rightp$.~If
either $\leftp$ or $\rightp$ is $\epsilon$ we write $-\rightp$ and $\leftp-$,
respectively, and call these patterns \emph{local}. We use the notation of
$\av{P}$ and $\co{P}$ as before for crossing patterns.

With our new definition, for the case of $P = \{ HH\}$ we have
\begin{equation*}
    \avu{P} = \avu{-HH, H-H, HH-}.
\end{equation*} 

For a path $UxDy$ in $\avu{P}$, if $x$ avoids $H$ then $UxDy$ avoids $H-H$ and $HH-$.
However, if $x$ contains $H$ then
$UxDy$ avoids $-H$ and $HH-$. 
That is
\begin{equation}
    \label{eq:avoidcrossingHH}
    \avu{-HH, H-H, HH-} =
    \avu{-HH, H-}
    \sqcup
    \left( \avu{-H, HH-} \cap \co{H-} \right).
\end{equation}
Every path in $\avu{-HH, H-}$ can be written $UxDy$ where $x \in \av{H}$ and $y
\in \av{HH}$.~Similarly, every path in $\avu{-H, HH-} \cap \co{H-}$ can be
written $UxDy$ where $x \in \av{HH} \cap \co{H}$ and $y \in \av{H}$.
This argument is shown pictorially in Figure~\ref{fig:avoidcrossingHH}.

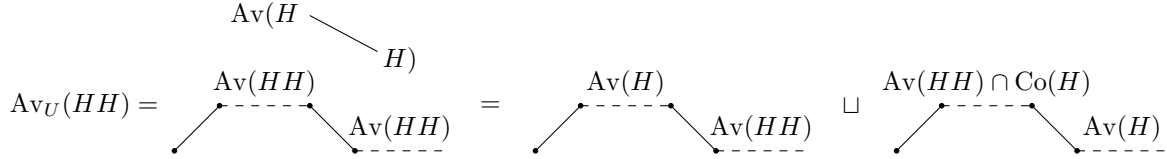
\begin{figure}[h]
   
    \begin{tikzpicture}[scale=0.6]
        \node (LHS) at (0, 1) {$\avu{HH} =$};
        \draw (2, 0) -- (3, 1);
        \filldraw (2, 0) circle (0.05);
        \filldraw (3, 1) circle (0.05);
        \draw[dashed] (3, 1) -- node[above] {$\av{HH}$} (5, 1);
        \draw (5, 1) -- (6, 0);
        \filldraw (5, 1) circle (0.05);
        \filldraw (6, 0) circle (0.05);
        \draw[dashed] (6, 0) -- node[above] {$\av{HH}$} (8, 0);

        \node (start) at (4, 3) {$\Av(H$};
        \node (end) at (7, 2) {$H)$};
        \draw (5,3) -- (6.5, 2.2); 

        \node (equals) at (9, 1) {$=$};

        \draw (10, 0) -- (11, 1);
        \filldraw (10, 0) circle (0.05);
        \filldraw (11, 1) circle (0.05);
        \draw[dashed] (11, 1) -- node[above] {$\av{H}$} (13, 1);
        \draw (13, 1) -- (14, 0);
        \filldraw (13, 1) circle (0.05);
        \filldraw (14, 0) circle (0.05);
        \draw[dashed] (14, 0) -- node[above] {$\av{HH}$} (16, 0);

        \node (union) at (17, 1) {$\sqcup$};

        \draw (18, 0) -- (19, 1);
        \filldraw (18, 0) circle (0.05);
        \filldraw (19, 1) circle (0.05);
        \draw[dashed] (19, 1) -- node[above] {$\av{HH} \cap \co{H}$} (21, 1);
        \draw (21, 1) -- (22, 0);
        \filldraw (21, 1) circle (0.05);
        \filldraw (22, 0) circle (0.05);
        \draw[dashed] (22, 0) -- node[above] {$\av{H}$} (24, 0);
    \end{tikzpicture}
    \caption{A pictorial representation of Equation~(\ref{eq:avoidcrossingHH}) for $\avu{HH}$.} \label{fig:avoidcrossingHH}
\end{figure}

Theorem~\ref{thm:startwithH} says that 
\begin{equation}\label{eq:startingwithHavoidingHH}
    \avh{HH} = \{Hp | p \in \av{H}\}.    
\end{equation}

Let $\Delta_{HH}(x)$
be the generating function for $\av{HH}$ and $C(x)=\frac{1-\sqrt{1-4x^2}}{2x^2}$ be the generating function
for $\av{H}$,  
then it follows from Equations~\eqref{eq:avodingdecomposition}, \eqref{eq:avoidcrossingHH}, and \eqref{eq:startingwithHavoidingHH} that $\Delta_{HH}(x)$ satisfies the equation
\begin{equation}\label{eq:exampleHH}
    \Delta_{HH}(x) = 1 + xC(x) + x^2C(x)\Delta_{HH}(x) + x^2(\Delta_{HH}(x) - C(x))C(x).
\end{equation}
From~\eqref{eq:exampleHH}, replacing $C(x)$ with $\frac{1-\sqrt{1-4x^2}}{2x^2}$ and squaring after suitable manipulations, we find that $\Delta_{HH}(x)$ satisfies the minimal polynomial
\begin{equation*}
    (4x^4 - x^2)\Delta_{HH}(x)^2 + (4x^3 - 4x^2 - x + 1)\Delta_{HH}(x) + 5x^2 - 1.
\end{equation*}

In passing, we observe that the generating function $\Delta_{HH}(x)$ is interesting in itself. In fact, the coefficients of even index are Catalan numbers, whereas the coefficients of odd index (which count, by the way, Motzkin paths having exactly one horizontal step with respect to the length) are the binomial coefficients $\binom{2n+1}{n+1}$, i.e.,~sequence A001700 in the Online Encyclopedia of Integer Sequences (OEIS) \cite{sloane}.

We generalize this idea in the following theorem.

\begin{theorem}
    \label{thm:avoidorcontain}
    For any finite sets $P$ and $Q$ of patterns there exist sets of local
    crossing patterns $P_1$, $P_2$, $\ldots$, $P_k$ and $Q_1$, $Q_2$, $\ldots$,
    $Q_k$ such that
    \begin{equation*}
        \avu{P} \cap \bigcap_{q \in Q} \co{q} = \bigsqcup_{i = 1}^k
        \left(
            \avu{P_i} \cap \bigcap_{q \in Q_i} \cou{q}
        \right).
    \end{equation*}
\end{theorem}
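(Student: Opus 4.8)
The plan is to proceed by induction on the total number of patterns in $P \cup Q$, using the observation illustrated by the $P = \{HH\}$ example: a crossing pattern $\leftp - \rightp$ with both $\leftp,\rightp \neq \epsilon$ can be resolved by a case split on whether the left factor $UxD$ of a path $UxDy$ contains $\leftp$ or not. Concretely, if $\leftp - \rightp$ is one of the patterns we wish to avoid, then a path $UxDy$ avoids it precisely when either $UxD$ avoids $\leftp$ (so nothing is required of $y$ relative to this pattern) or $UxD$ contains $\leftp$ and $y$ avoids $\rightp$. The first branch replaces $\leftp - \rightp$ by the local pattern $\leftp-$ in the avoidance set; the second branch replaces it by the local pattern $-\rightp$ in the avoidance set while adding the local pattern $\leftp-$ to the containment set. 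Dually, a crossing pattern $\leftp - \rightp$ that we wish to contain is handled by a case split: $UxDy$ contains it iff $UxD$ contains $\leftp$ \emph{and} $y$ contains $\rightp$, which is a single branch replacing $\leftp-\rightp$ by the two local patterns $\leftp-$ (in containment) and $-\rightp$ (in containment).

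First I would reduce $P$ to a set of crossing patterns. Recall from the discussion preceding Equation~\eqref{eq:avoidcrossingHH} that for an ordinary pattern $p$, a path $UxDy \in \M_U$ avoids $p$ iff it avoids every crossing pattern $\leftp_i - \rightp_i$ obtained by splitting $p = \leftp_i \rightp_i$ into a prefix (including the empty and full splits, which give the local patterns $p-$ and $-p$). So $\avu{P}$ equals $\avu{P^\times}$ where $P^\times$ is a finite set of crossing patterns, and similarly each $\co{q}$ becomes an intersection... wait — here one must be careful: $\co{q}$ corresponds to containing \emph{some} split, i.e.\ a union, not an intersection. So $\avu{P}\cap\bigcap_{q\in Q}\cou{q}$ first expands, via the splits of the $q$'s, into a finite \emph{disjoint} union (after making the union disjoint by a standard inclusion–exclusion-free refinement, or simply by treating $\co{q} = \M_U \setminus \av{q}$ and distributing) of sets of the form $\avu{P^\times \cup (\text{some containment conditions})}$. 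I would set up the induction so that the statement being proved is already in the desired shape: a union over terms $\avu{P_i}\cap\bigcap_{q\in Q_i}\cou{q}$ with $P_i, Q_i$ \emph{local}, and the induction measure is the number of non-local crossing patterns appearing.

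The inductive step is then: pick any non-local crossing pattern $\leftp - \rightp$ appearing in some $P_i$ (avoidance) or $Q_i$ (containment), and apply the appropriate case split above. Each split strictly decreases the number of non-local patterns in that term (replacing one non-local pattern by local ones) while keeping all terms of the required form and keeping the union disjoint — the avoidance split is disjoint because the two branches are distinguished by "$UxD$ avoids $\leftp$" versus "$UxD$ contains $\leftp$". When no non-local patterns remain, we are done. The bookkeeping to confirm is that the two resulting branches in the avoidance case are genuinely disjoint and that their union is exactly the original set; this is the content of the identity $\av{\leftp-\rightp} = \av{\leftp-}\sqcup(\av{-\rightp}\cap\co{\leftp-})$ inside $\M_U$, which is immediate from the definition of the crossing pattern.

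The main obstacle I expect is not any single step but the organization of the disjointness and the termination argument simultaneously: one must choose a well-founded measure (e.g.\ the multiset of lengths of the left parts of non-local patterns, or just the count of non-local crossing-pattern occurrences across all terms) and verify that every split — both the initial reduction of ordinary patterns to crossing patterns, and the subsequent elimination of non-local patterns — decreases it, all while never breaking the "disjoint union of sets of the prescribed form" invariant. The containment-to-local split contributes the subtlety that it does not branch but does shrink the measure, whereas the avoidance split branches into two; making sure the global union stays disjoint after many such branchings is the fiddly part, but it follows because at each branching the two new terms are separated by a condition ($UxD$ contains vs.\ avoids a fixed local pattern) that is preserved under all later splits.
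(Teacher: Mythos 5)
Your proposal follows essentially the same route as the paper: reduce ordinary patterns to the set of crossing patterns obtained from all splits, make the resulting containment conditions disjoint by partitioning on avoid/contain, and then eliminate non-local crossing patterns via exactly the two identities the paper uses (avoidance of $\leftp-\rightp$ splits into avoiding $\leftp-$ versus containing $\leftp-$ and avoiding $-\rightp$; containment of $\leftp-\rightp$ is equivalent to containing both $\leftp-$ and $-\rightp$), with termination by a decreasing count of non-local patterns. This matches the paper's proof, which performs the same repeated application of these case splits, so no gap to report.
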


\begin{proof}
    For a pattern $p$ we define the set $\split{p}$ consisting of all crossing
    patterns $\leftp-\rightp$ such that $\leftp\rightp$ is $p$,
    \begin{equation*}
        \split{p} = \{
            \leftp-\rightp ~ | ~ \leftp, \rightp \in \{U, D, H\}^*
                                 \text{ and } \leftp\rightp = p
        \}.
    \end{equation*}
    It follows that every Motzkin path in $\avu{P}$ must avoid
    $\bigsqcup_{p \in P} \split{p}$ and every Motzkin path in $\bigcap_{q \in Q} \co{q}$ must contain some pattern in $\split{q}$ for
    each $q$ in $Q$, i.e.~
    \begin{equation*}
        \avu{P} \cap \bigcap_{q \in Q} \co{q} = \avu{\bigsqcup_{p \in P} \split{p}} \cap
        \bigcap_{q \in Q} \cou{\split{q}}.
    \end{equation*}
    
    For a set of patterns $Q$ we can partition the set of Motzkin paths
    containing $Q$ into those that avoid a pattern $q$ in $Q$ and those that
    contain $q$.~In our case, if we set $Q=\{ q_1 ,q_2 ,\ldots ,q_h \}$, we are avoiding a set of crossing patterns $\widetilde{P}=\bigsqcup_{p \in P} \split{p}$ and must
    contain the sets of crossing patterns $\widetilde{Q}_1$, $\widetilde{Q}_2$, $\ldots$, $\widetilde{Q}_h$, where $\widetilde{Q}_i =\split{q_i}$, for all $i=1,2,\ldots ,h$.~Therefore, if $q \in
    \widetilde{Q}_1$, we have
    \begin{align*}
        \avu{\widetilde{P}} \cap \bigcap_{i=1}^h \cou{\widetilde{Q}_i}
        =&
        \left(
            \avu{\widetilde{P} \cup \{q\}} \cap \cou{\widetilde{Q}_1 \backslash \{q\}} \cap
            \bigcap_{i=2}^h \cou{\widetilde{Q}_i}
        \right) \\
        & \sqcup
        \left(
            \avu{\widetilde{P}} \cap \cou{\{q\}} \cap \bigcap_{i=2}^h \cou{\widetilde{Q}_i}
        \right).
    \end{align*}
    By iterating this process, and perhaps rearranging the order of the $\widetilde{Q}_i$,
    this will result in writing 
    \begin{equation*}
        \avu{P} \cap \bigcap_{q \in Q} \co{q} = \bigsqcup_{i = 1}^h
        \left(
            \avu{\widetilde{P}_i} \cap \bigcap_{q \in \widetilde{Q}_i} \cou{q}
        \right).
    \end{equation*}
    where the $\widetilde{P}_i$ and $\widetilde{Q}_i$ are sets of (not necessarily local) crossing
    patterns. 

    If $\leftp-\rightp$ in $\widetilde{Q}_1$ then it follows that the paths must contain both $\leftp-$ and $-\rightp$, i.e.~they contain every pattern in the set $\widetilde{Q}_1' = \left( \widetilde{Q}_1 \backslash
    \{\leftp-\rightp\}\right) \sqcup \{\leftp-, -\rightp\}$ and we have
    \begin{equation}
        \label{eq:crossingcontain}
        \avu{\widetilde{P}_1} \cap \bigcap_{q \in \widetilde{Q}_1} \cou{q} = 
        \avu{\widetilde{P}_1} \cap \bigcap_{q \in \widetilde{Q}_1'} \cou{q}.
    \end{equation}
    If $\leftp-\rightp$ in $\widetilde{P}_1$ then we can partition the paths to those that avoid $\leftp-$ and those that contain $\leftp-$ giving
    \begin{align}
        \label{eq:crossingavoid}
        \begin{split}
        \avu{\widetilde{P}_1} \cap \bigcap_{q \in \widetilde{Q}_1} \cou{q} =& 
        \left(
            \avu{\widetilde{P}_1 \sqcup \{\leftp-\}} \cap \bigcap_{q \in \widetilde{Q}_1} \cou{q}
        \right) \\ &\sqcup 
        \left(
            \avu{\widetilde{P}_1 \sqcup \{-\rightp\}} \cap \bigcap_{q \in \widetilde{Q}_1 \sqcup \{\leftp-\}} \cou{q}
        \right).
        \end{split}
    \end{align}
    By repeated application of Equations~(\ref{eq:crossingcontain}) and
    (\ref{eq:crossingavoid}), simplifying the avoidance and containment sets, and possibly reordering the $\widetilde{P}_i$ and $\widetilde{Q}_i$, we get
    the desired disjoint union where all of the crossing patterns in the resulting sets $P_i$ and $Q_i$ are local. 
\end{proof}

The case analysis of Theorem~\ref{thm:avoidorcontain} will result in a disjoint union
with sets of the form $\avu{P} \cap \bigcap_{q \in Q} \cou{q}$ for some sets of
local crossing patterns $P$ and $Q$.~As the pattern containment conditions for these sets of Motzkin paths become local we get the following theorem.

\begin{theorem}
    \label{thm:cartesianproduct}
    Let $P$ and $Q$ be sets of local crossing patterns. Let $P_r$ ($Q_r$) be the
    right local patterns in $P$ ($Q$). Let $P_\ell$ ($Q_\ell$) be the patterns
    obtained by taking the left local patterns in $P$ ($Q$) and removing a
    single $U$ from the left and single $D$ from the right if such exists. Then,
    \begin{equation*}
        \avu{P} \cap \bigcap_{q \in Q} \cou{q} =
        \{
            UxDy ~ | ~ 
            x \in \avu{P_\ell} \cap \bigcap_{q \in Q_\ell} \cou{q}, 
            y \in \avu{P_r} \cap \bigcap_{q \in Q_r} \cou{q}
        \}.
    \end{equation*}
\end{theorem}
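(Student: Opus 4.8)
The strategy is to combine the uniqueness of the first-return decomposition of a path in $\M_U$ with a single combinatorial lemma describing how subword occurrences inside $UxD$ are governed by $x$ alone. Recall that every $p \in \M_U$ can be written $p = UxDy$ with $x, y \in \M$ in a \emph{unique} way (the opening $U$ is the first step of $p$ and the matching $D$ is where $p$ first returns to the $x$-axis), so that $p \mapsto (x,y)$ is a bijection $\M_U \to \M \times \M$. By the definition of a local crossing pattern, $p = UxDy$ contains $\leftp-$ precisely when $UxD \succeq \leftp$, and contains $-\rightp$ precisely when $y \succeq \rightp$. Consequently, membership of $p$ in $\avu{P} \cap \bigcap_{q \in Q} \cou{q}$ decouples into a requirement on $UxD$ (coming from the left local patterns in $P$ and $Q$) and a requirement on $y$ (coming from the right local patterns); the latter is already phrased in terms of $P_r$ and $Q_r$, so the whole task reduces to rewriting ``$UxD$ avoids/contains $\leftp$'' as a condition on $x$.

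That rewriting is the heart of the argument, and I expect it to be the main obstacle. The claim I would isolate is: for every word $w \in \{U, H, D\}^*$ and every Motzkin path $x$, one has $UxD \succeq w$ if and only if $x \succeq \widehat{w}$, where $\widehat{w}$ is obtained from $w$ by deleting its initial letter when it is a $U$ and its final letter when it is a $D$ --- exactly the operation used to build $P_\ell$ from the left local patterns of $P$. The proof rests on one observation that keeps everything finite: in a subword occurrence of $w$ inside $UxD$, the first position of $UxD$, if used at all, must be matched to the first letter of $w$, hence is available only when that letter is $U$; symmetrically the last position of $UxD$ is available only when $w$ ends in $D$. Splitting on whether an occurrence uses the first position, the last position, both, or neither, one finds in each branch an occurrence inside $x$ of a word having $\widehat{w}$ as a subword, which yields $x \succeq \widehat{w}$ by monotonicity of subword containment; conversely an occurrence of $\widehat{w}$ in $x$ is promoted to an occurrence of $w$ in $UxD$ by prepending the opening $U$ and/or appending the closing $D$ as dictated by the first and last letters of $w$, which is always possible since $x$ occupies the interior positions of $UxD$.

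Granting the lemma, set $P_\ell = \{\widehat{\leftp} : \leftp- \in P\}$ and $P_r = \{\rightp : -\rightp \in P\}$, and define $Q_\ell, Q_r$ from $Q$ in the same way. Then $p = UxDy$ lies in $\avu{P} \cap \bigcap_{q \in Q} \cou{q}$ if and only if $UxD$ avoids each $\leftp$ with $\leftp- \in P$ and contains each $\leftp$ with $\leftp- \in Q$, while simultaneously $y$ avoids each $\rightp$ with $-\rightp \in P$ and contains each $\rightp$ with $-\rightp \in Q$; by the lemma the first conjunct is equivalent to $x$ avoiding $P_\ell$ and containing every pattern of $Q_\ell$. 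Since $p \mapsto (x,y)$ is a bijection and $x, y$ range independently over all of $\M$, the paths $p$ with these two properties are precisely those of the form $UxDy$ with $x$ in the set cut out by $P_\ell, Q_\ell$ and $y$ in the set cut out by $P_r, Q_r$, which is the asserted product description. The one point that will need genuine care when the lemma is written out is checking that removing a \emph{single} $U$ and a \emph{single} $D$ is exactly right, including the degenerate cases of very short words and of words built only from $U$'s and $D$'s, where the enclosing steps and the internal letters interfere; the extremal-position observation above is what makes those cases routine.
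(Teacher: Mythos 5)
Your proof is correct, and it supplies an argument that the paper itself omits: Theorem~\ref{thm:cartesianproduct} is stated there without proof, with the surrounding examples (e.g.\ $\avu{-HD,\,UHH-}\cap\cou{UH-}=\{UD\}\times(\av{HH}\cap\co{H})\times\av{HD}$) implicitly relying on exactly the two ingredients you isolate, namely the uniqueness of the first-return factorisation $p=UxDy$ and the reduction lemma $UxD\succeq w \iff x\succeq\widehat{w}$, whose forward direction you correctly settle by observing that the first (resp.\ last) position of $UxD$ can only be matched to the first (resp.\ last) letter of $w$. The only discrepancy is with the paper's own statement, which writes $x\in\avu{P_\ell}$ and $y\in\avu{P_r}$; this is a typo for $\av{P_\ell}$ and $\av{P_r}$ (as the examples and the ensuing convolution formula confirm), and your reading is the correct one.
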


Let $a_n$ be the number of length $n$ Motzkin paths in $\av{P} \cap \bigcap_{q
\in Q} \co{q}$, $b_n$ be the number of length $n$ Motzkin paths in $\av{P_\ell}
\cap \bigcap_{q \in Q_\ell} \co{q}$, and $c_n$ be the number of length $n$
Motzkin paths in $\av{P_r} \cap \bigcap_{q \in Q_r} \cou{q}$.
Theorem~\ref{thm:cartesianproduct} implies $a_n = \sum_{i = 0}^{n-2}
b_ic_{n-i-2}$.

\subsection{Finding a specification}

\emph{Combinatorial exploration}, introduced in Bean~\cite{bean:thesis}, is an
automatic method for finding \emph{(combinatorial) specifications}. 
It consists of a systematic
application of \emph{strategies} to create \emph{(combinatorial) rules} about a
\emph{(combinatorial) class} of interest. Each rule describes how to build a
class from other classes using well-defined \emph{constructors}. In this paper,
we only use the disjoint union and Cartesian product constructors. Using these
rules, the method then finds a specification which can be used, for example, to
count the number of objects of each size, generate objects, and sample uniformly
at random. This entire procedure has been implemented as the
\verb|comb_spec_searcher| Python package by Bean et al.~\cite{bean:cssrepo}. 

Theorems~\ref{thm:startwithH}, \ref{thm:avoidorcontain}, and
\ref{thm:cartesianproduct} encode strategies for finding specifications for
pattern-avoiding Motzkin paths. Moreover, as the recursive application of these
theorems result in either shortening of the patterns being avoided and contained
or reducing the size of the sets being avoided and contained, they give a finite
process that will always result in a specification. Our Python implementation, which
uses the \verb|comb_spec_searcher| package, can be found on GitHub by Bean~\cite{bean:motzkinrepo}. 

We do one final example to illustrate the recursive nature of the theorems.

\subsection{Enumerating Motzking paths avoiding UHHD}

In this section, we outline how Theorems~\ref{thm:startwithH}, \ref{thm:avoidorcontain},
and \ref{thm:cartesianproduct} are used to enumerate $\av{UHHD}$. 

We first apply Equation~\eqref{eq:avodingdecomposition} to get 
\begin{equation}
    \label{eq:UHHDpartition}
    \av{UHHD} = \{\epsilon\} \sqcup \avh{UHHD} \sqcup \avu{UHHD}.
\end{equation}
Theorem~\ref{thm:startwithH} tells us that 
\begin{equation}
    \label{eq:UHHDstartH}
    \avh{UHHD} = \{ H \} \times \av{UHHD}.
\end{equation}
Theorem~\ref{thm:avoidorcontain} tells us that we can find sets of local crossing patterns to describe $\avu{UHHD}$.~We follow the algorithm outlined in the proof of the theorem to get this description. The set of crossing patterns coming from $UHHD$ is 
\begin{equation*}
    s(UHHD) = \{-UHHD, U-HHD, UH-HD, UHH-D, UHHD- \}.
\end{equation*}
Therefore, $\avu{UHHD} = \avu{s(UHHD)}$.~As we have no sets of patterns $Q$ to contain as in Theorem~\ref{thm:avoidorcontain}, we apply Equation~\ref{eq:crossingavoid} to get 
\begin{align}
    \label{eq:UHHDzerothiteration}
    \begin{split}
            \avu{s(UHHD)} &= \avu{-UHHD, -HHD, UH-} \cap \cou{U-} \\
    & \phantom{=} \sqcup 
    \left( 
        \avu{-HD, UHH-D, UHHD-}
        \cap 
        \cou{UH-}
    \right) \\
    &= \avu{-HHD, UH-} \cap \cou{U-} \\
    & \phantom{=} \sqcup 
        \avu{-HD, UHH-}
        \cap 
        \cou{UH-}
    \\
    & \phantom{=} \sqcup 
    \avu{-D, UHHD-}
    \cap 
    \cou{UHH-}
    \end{split}
\end{align} 
Note, the case where you are avoiding $U-$ is precisely the empty set since all of the paths contain $U-$, and therefore we have not included this in our equations.

Each of the disjoint sets on the right are defined by local crossing patterns and so we apply Theorem~\ref{thm:cartesianproduct} to each of these sets. This gives the equations
\begin{align}
    \label{eq:UHHDfirstiteration}
    \begin{split}
    \avu{-HHD, UH-} \cap \cou{U-} 
    & = 
    \{UD\} 
    \times 
    \left( 
        \av{H} \cap \co{\epsilon} 
    \right) 
    \times
    \av{HHD}  \\
    \avu{-HD, UHH-}
        \cap 
    \cou{UH-}
    & = 
    \{UD\}
    \times 
    \left(
        \av{HH} \cap \co{H}
    \right) 
    \times 
    \av{HD} \\
    \avu{-D, UHHD-}
    \cap 
    \cou{UHH-} 
    & = 
    \{UD\} 
    \times 
    \left(
        \av{HH} \cap \co{HH}
    \right) 
    \times 
    \av{D}.
    \end{split}
\end{align}
As a path can not both avoid and contain $HH$, the latter set is in fact empty. The algorithm would actually spot this sooner as it is not hard to argue that a path cannot simultaneously avoid $UHHD-$ and contain $UHH-$ since the final letter before the split in a path is $D$. 

In the previous section we enumerated $\av{HH} \cap \co{H}$ by utilising the set difference 
\begin{equation*}
    \av{HH} \cap \co{H} = \av{HH} \backslash \av{H}
\end{equation*}
which implies its generating function is $\Delta_{HH}(x) - C(x)$.~Although this is a legitimate method for enumeration, due to the set difference operation above, the sampling of the Motzkin paths will not be efficient.
Therefore our algorithm instead continues to apply the theorems in order to only use disjoint unions and Cartesian products. 

In Equations~\eqref{eq:UHHDfirstiteration}, we need to expand further the sets $\av{HH} \cap \co{H}$, $\av{HHD}$, and $\av{HD}$.~We first apply Theorem~\ref{thm:startwithH} to get 
\begin{align}
    \label{eq:UHHDsecondstartH}
    \begin{split}
    \avh{HH} \cap \co{H} & = \{H\} \times \av{H} \\
    \avh{HHD} & = \{H\} \times \av{HD}\\
    \avh{HD} & = \{H\} \times \av{D}.
    \end{split}
\end{align}
Note, $\av{D}$ is precisely $\{H\}^*$.

We then apply Theorem~\ref{thm:avoidorcontain} to get 
\begin{align}
    \label{eq:UHHDseconditeration}
    \begin{split}
    \avu{HH} \cap \co{H} & = \avu{-HH, H-H, HH-} \cap \cou{-H, H-} \\
    & = \left(
            \avu{H-, -HH} \cap \cou{-H} 
        \right)
        \sqcup 
        \left( 
            \avu{-H, HH-} \cap \cou{H-}
        \right) \\
    \avu{HD} & = \avu{-HD, H-D, HD-} \\
    & = \avu{H-, -HD} \sqcup 
    \left( 
        \avu{-D, HD-} \cap \cou{H-} 
    \right)\\
    \avu{HHD} & = \avu{-HHD, H-HD, HH-D, HHD-} \\
    & = \avu{H-, -HHD} \sqcup 
    \left(
        \avu{-HD, HH-D, HHD-} \cap \cou{H-}
    \right) \\
    & = \avu{H-, -HHD} \sqcup 
        \left(
            \avu{-HD, HH-} \cap \cou{H-}
        \right) \\
        & \phantom{=}
        \sqcup 
        \left(
            \avu{-D, HHD-} \cap \cou{HH-}
        \right).
    \end{split}
\end{align}
By a similar argument as before the sets $\avu{-D, HD-} \cap \cou{H-}$ and $\avu{-D, HHD-} \cap \cou{HH-}$ are the empty set. 
For the remaining all of the patterns are local crossing patterns so we can apply Theorem~\ref{thm:cartesianproduct}.
\begin{align}
    \label{eq:UHHDfinaliteration}
    \begin{split}
    \avu{H-, -HH} \cap \cou{-H} 
    & = \{UD\} \times \av{H} \times \left( \av{HH} \cap \co{H} \right) \\
    \avu{-H, HH-} \cap \cou{H-} 
    & = \{UD\} \times \left( \av{HH} \cap \co{H} \right) \times \av{H} \\
    \avu{H-, -HD} 
    & = \{UD\} \times \av{H} \times \av{HD} \\
    \avu{H-, -HHD}
    & = \{UD\} \times \av{H} \times \av{HHD} \\
    \avu{-HD, HH-} \cap \cou{H-} 
    & = \{UD\} \times \left( \av{HH} \cap \co{H} \right) \times \av{HD}
    \end{split}
\end{align}
The Equations~\eqref{eq:UHHDpartition}, \eqref{eq:UHHDstartH}, \eqref{eq:UHHDzerothiteration}, \eqref{eq:UHHDfirstiteration}, \eqref{eq:UHHDsecondstartH}, \eqref{eq:UHHDseconditeration}, and \eqref{eq:UHHDfinaliteration} give a combinatorial specification for $\av{UHHD}$, and can be used directly to get the generating function 
\begin{equation*}
    \frac{1 - 3x - 4x^2 + 12x^3 - (1 - 3x - 4x^2 + 8x^3)\sqrt{1 - 4x^2}}{2x^2 (1 - 2x -3x^2 + 8x^3 - 4x^4)}
\end{equation*}
for this set. The coefficients of this generating function are the sequence A347036 in the OEIS~\cite{sloane}. We ran our algorithm on many sets of pattern-avoiding Motzkin paths. We list a few which have connections to sequences in the OEIS~\cite{sloane}. 

The generating function for $\av{UDH}$ is 
\begin{equation*}
    \frac{1 - 2x - \sqrt{1-4x^2}}{2x(2x - 1)}
\end{equation*}
which shows that there are $\binom{n}{\lfloor \frac{n}{2} \rfloor}$ paths of length $n$ in this set, i.e.~sequence A001405 in the OEIS~\cite{sloane}. 

The set $\av{UDH, UHD}$ has the generating function 
\begin{equation*}
    \frac{1 - \sqrt{1 - 4x^2}}{2x^2(x - 1)}
\end{equation*}
whose coefficients are the sequence A110199 in the OEIS~\cite{sloane}. This tells us that there are $\sum_{k = 0}^{n} C_n$ many paths of lengths $2n$ and $2n + 1$ in $\av{UDH, UHD}$. This follows from the fact that the paths in this set can be described as some Dyck path prepended with an arbitrary number of $H$ steps. 

The generating functions for $\av{UUDD}$ and $\av{UDUDUD, UUDDUD, UUDUDD}$ are 
\begin{equation*}
    \frac{1 - 4x + 7x^2 - 6x^3 + 3x^4}{(1-x)^5} \text{ and } \frac{1 - 4x + 6x^2 - 4x^3 + 2x^4}{(1-x)^3(1-2x)}
\end{equation*}
whose coefficients are the sequences A000127 and A084634 in the OEIS~\cite{sloane}. 

In a few cases, we found the sequences for the coefficients at odd indices in the generating functions appeared in the OEIS~\cite{sloane}. For example, if we let $a_n$ be the number of length $n$ Motzkin paths in $\av{HHUD, HUHD, UHHD}$
then the generating function for the sequence of odd length paths in this set, i.e.,~$\sum_{n \geq 0} a_{2n + 1} x^n$, is
\begin{equation*}
    \frac{1 - 4x - \sqrt{1-4x}}{-2x(1 - 5x + 4x^2)}.
\end{equation*}
The coefficients are the sequence A079309 in the OEIS~\cite{sloane}.
Similarly, the sequence given by the number of odd length paths in $\av{UDHH}$ has the generating function
\begin{equation*} 
    \frac{1 - 6x + 8x^2 - (1 - 3x)\sqrt{1 - 4x}}{- x + 8x^2 - 16x^3}
\end{equation*}
which appears to be the sequence A194460 in the OEIS~\cite{sloane}.

\section{Proof of Theorem \ref{thm:motzkinarealmostrational}}
\label{sec:proof}


In this section, we give a proof of Theorem \ref{thm:motzkinarealmostrational}.
Analogously to what has been done in Bacher et al.~\cite{bacheretal:dyck} for Dyck paths, 
our strategy is the following.
First, we describe functional equations satisfied by some bivariate generating functions of certain Motzkin prefixes, 
where the relevant statistics are the length and the final height. 
These are then used to find functional equations for the generating functions of Motzkin paths avoiding a single pattern.
The derived equations will clearly show that such generating functions are rational over $x$ and $C(x)$, as desired.

\bigskip

A \emph{Motzkin prefix} is defined exactly like a Motzkin path, except that the final point of the path has nonnegative height (so it is not required that the path ends on the $x$-axis, but it can end at every point having nonnegative integer coordinates).
Denote with $\MP$ the set of all Motzkin prefixes.
For a given non-empty Motzkin prefix $p$, let $p^-$ be the Motzkin prefix obtained from $p$ by removing its last step. 
Given a set $P$ of Motzin prefixes, let $\minco{P}=\{ p\in \MP ~|~ p ~ \text{contains} ~ P ~ \text{and} ~ p^- ~ \text{avoids} ~ P  \}$.~In other words, an element of $\minco{P}$ is a smallest Motzkin prefix containing $P$.~
In the sequel, we will be interested in the case where $P=\{ q\}$, for a certain Motzkin prefix $q$.

Denote with $\Gamma_q (x,y)$ the bivariate generating function of the smallest Motzkin prefixes containing $q$,
where $x$ keeps track of the length and $y$ keeps track of the final height.
For instance, choosing $q=UH$, the generic smallest Motzkin prefix containing $q$ is obtained by concatenating
a sequence of letters $H$ with a non-empty Dyck prefix followed by an $H$.~Thus, recalling the expression of the bivariate generating function
$\mathcal{DP}(x,y)=\frac{2}{1-2xy+\sqrt{1-4x^2}}$ of Dyck prefixes, we get

$$\Gamma_q (x,y)=\frac{1}{1-x}\left( \frac{2}{1-2xy+\sqrt{1-4x^2}}-1\right)x .$$

The following result gives a recursive procedure to compute $\Gamma_q (x,y)$.
In the statement below, $\epsilon$ is the empty path; 
moreover, given $q\in \MP$ and $X\in \{ U,H,D\}$, $qX$ is the Motzkin prefix obtained by appending the step $X$ to $q$.

\begin{proposition}\label{Prop_C_P}
	For any given Motzkin prefix $q$, we have:
	\begin{align}
	\Gamma_{\epsilon}(x,y)&=1, \\
	\label{C_qU}
	\Gamma_{qU}(x,y)&=\frac{xy}{(1-x)(x-y(1-x))}\left( x\Gamma_q \left( x,\frac{x}{1-x}\right) -y(1-x)\Gamma_q (x,y) \right) , \\
	\label{C_qH}
	\Gamma_{qH}(x,y)&=\frac{2x}{\left(1-2xy+\sqrt{1-4x^2}\right)\left(y-xC(x)\right)}\left( y\Gamma_q (x,y)-xC(x)\Gamma_q (x,xC(x)) \right) , \\
	\label{C_qD}
	\Gamma_{qD}(x,y)&=\frac{x}{y}\left( \frac{1}{1-xy-x}\Gamma_q (x,y)-\frac{1}{1-x}\Gamma_q (x,0) \right) .
	\end{align}
\end{proposition}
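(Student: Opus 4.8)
The plan is to give a structural description of $\minco{\{qX\}}$ in terms of $\minco{\{q\}}$ and then to extract the four equations by summing $x^{|p|}y^{\mathrm{ht}(p)}$ (writing $\mathrm{ht}$ for the final height) over that description. The base case needs no work: the empty path is the unique smallest Motzkin prefix containing the empty pattern, so $\minco{\{\epsilon\}}=\{\epsilon\}$ and $\Gamma_\epsilon(x,y)=1$.

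The crux, which I would establish first, is the following claim: for $q\in\MP$ and $X\in\{U,H,D\}$, a Motzkin prefix $p$ lies in $\minco{\{qX\}}$ if and only if it factors, uniquely, as $p=rsX$, where $r\in\minco{\{q\}}$, the word $s$ uses only letters from $\{U,H,D\}\setminus\{X\}$, and $rs$ is a Motzkin prefix (so that $rsX$ is one too: automatically if $X\in\{U,H\}$, and meaning $\mathrm{ht}(rs)\geq1$ if $X=D$). For the forward implication: since $p^-$ avoids $qX$, every occurrence of $qX$ in $p$ must use the last step of $p$, and, being the final letter of the pattern, that step equals $X$; write $p=\bar{p}X$. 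Then $\bar{p}$ contains $q$ (delete the terminal $X$ from an occurrence of $qX$ in $p$), so we may take $r$ to be the shortest prefix of $\bar{p}$ containing $q$, whence $r\in\minco{\{q\}}$; writing $\bar{p}=rs$, if $s$ contained a letter $X$ then an occurrence of $q$ inside $r$ followed by that $X$ would yield $qX\preceq\bar{p}=p^-$, impossible, so $s$ is $X$-free. Conversely, given any such factorization, an occurrence of $q$ inside $r$ followed by the final $X$ shows $qX\preceq p$, while $rs$ avoids $qX$: since $s$ is $X$-free, any occurrence of $qX$ in $rs$ would lie entirely within $r$, contradicting $r\in\minco{\{q\}}$ (an occurrence of $qX$ inside $r$ would force $r^-$ to contain $q$). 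Uniqueness of $r$, hence of $s$, holds because two distinct elements of $\minco{\{q\}}$ cannot both be prefixes of $p^-$.

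Granting this, each of \eqref{C_qU}, \eqref{C_qH} and \eqref{C_qD} follows by summing $x^{|p|}y^{\mathrm{ht}(p)}$ over $p=rsX$, grouping by $r$ and setting $h=\mathrm{ht}(r)$: the sums over $r$ recombine via $\sum_{r\in\minco{\{q\}}}x^{|r|}z^{\mathrm{ht}(r)}=\Gamma_q(x,z)$, and what remains is the inner sum over the admissible words $s$, which depends on $h$ alone. If $X=U$, then $s$ is an $\{H,D\}$-word that keeps the height nonnegative; being monotone non-increasing, it is determined by a run of $H$-steps at each of the levels $h,h-1,\dots$ down to its final level, so $\sum_s x^{|s|}y^{\mathrm{ht}(rs)}=\frac{1}{x-y(1-x)}\bigl((\tfrac{x}{1-x})^{h+1}-y^{h+1}\bigr)$, and $\mathrm{ht}(p)=\mathrm{ht}(rs)+1$. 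If $X=H$, then $s$ is a $\{U,D\}$-word keeping the height nonnegative, and decomposing at the first time $s$ reaches height $h-1$ gives $G_h(x,y)=y^h\mathcal{DP}(x,y)+xC(x)G_{h-1}(x,y)$ with $G_0(x,y)=\mathcal{DP}(x,y)$ (using that Dyck paths have length generating function $C(x)$ and Dyck prefixes have bivariate generating function $\mathcal{DP}(x,y)=\frac{2}{1-2xy+\sqrt{1-4x^2}}$), whose solution is $G_h(x,y)=\mathcal{DP}(x,y)\frac{y^{h+1}-(xC(x))^{h+1}}{y-xC(x)}$, and $\mathrm{ht}(p)=\mathrm{ht}(rs)$. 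If $X=D$, then $s$ is an arbitrary $\{U,H\}$-word (the height never drops below $h$), but $rsD\in\MP$ requires $h$ plus the number of $U$-steps of $s$ to be at least $1$; since an $H$-step weighs $x$ and a $U$-step weighs $xy$, the unconstrained inner sum is $\frac{1}{1-xy-x}$, from which one removes the all-$H$ words (of total weight $\frac{1}{1-x}$) exactly when $h=0$, and $\mathrm{ht}(p)=\mathrm{ht}(rs)-1$. In each case, after multiplying by $x$ (the weight of the final $X$) and by $y$, $1$, or $y^{-1}$ to correct the exponent of $y$, one expands $(\cdot)^{h+1}$ as a geometric sum and identifies the remaining sums over $r$ as $\Gamma_q$ evaluated at $z=y$, $z=\tfrac{x}{1-x}$, $z=xC(x)$, or $z=0$; a routine simplification (in the case $X=D$ using $\frac{1}{1-xy-x}-\frac{1}{1-x}=\frac{xy}{(1-x)(1-xy-x)}$, which removes the apparent pole at $y=0$) yields the three stated identities.

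I expect the $X=D$ case to be the main obstacle: appending a $D$ can violate nonnegativity, so the admissible set of words $s$ genuinely depends on whether $r$ ends at height $0$; it is this split that produces the extra term $\Gamma_q(x,0)$ and that forces the small cancellation clearing the spurious $y^{-1}$. A minor but necessary check is the uniqueness of the factorization $p=rsX$, so that the generating-function sum counts each $p$ exactly once; this rests on the fact that no two distinct elements of $\minco{\{q\}}$ are nested as words.
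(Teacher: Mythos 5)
Your proposal is correct and follows essentially the same route as the paper's proof: the factorization $p=rsX$ with $r\in\minco{\{q\}}$ and $s$ an $X$-free word is exactly the paper's decomposition $\pi=\pi'\beta^{(h)}U$, $\pi=\pi'\delta^{(h)}H$, $\pi=\pi'\alpha D$, and your inner generating functions (the telescoping sum for the $\{H,D\}$-suffix, the first-passage recursion giving $\mathcal{DP}(x,y)\frac{y^{h+1}-(xC(x))^{h+1}}{y-xC(x)}$, and the $\frac{1}{1-x-xy}$ sum with the all-$H$ correction at height $0$) coincide with the paper's $B^{(h)}$, $D^{(h)}$ and $A(x,y)$. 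The only difference is that you state and prove the structural characterization of $\minco{\{qX\}}$ (including uniqueness of the factorization) explicitly, which the paper leaves implicit.
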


\begin{proof}
Clearly, the only smallest Motzkin prefix containing the empty path is $\epsilon$ itself, which gives $\Gamma_{\epsilon}(x,y)=1$.

\medskip
Let $\pi \in \minco{qU}$ and denote with $\pi'$ the smallest prefix of $\pi$ containing $q$.~Moreover, we indicate with $h$ the final height of $\pi'$.~Then, $\pi$ can be factorized as
\begin{eqnarray}\label{Q_CPU}
\pi =\pi' \beta^{(h)}U,
\end{eqnarray}
where $\beta^{(h)}$ is a path starting at height $h$ (which is the height of the final point of $\pi'$) using only $H$ steps and $D$ steps and not crossing the $x$-axis (i.e., the final height $i$ of $\beta^{(h)}$ is such that $0\leq i \leq h$). Clearly, the path $\beta^{(h)}$ is the reverse of a path $\alpha$ starting at the origin, using only $U$ steps and $H$ steps, with final height less than or equal to $h$.~It is not difficult to compute the bivariate generating function $A(x,y)$ of such paths $\alpha$, where $x$ and $y$ track the length and the final height of $\alpha$, respectively. Indeed, such a path $\alpha$, if not empty, can be obtained either by taking an $H$ step followed by a pattern of the same kind or by taking a $U$ step followed by a pattern of the same kind. This leads to the functional equation:
$$
A(x,y)=1+xA(x,y)+xyA(x,y),\
$$
hence
\begin{eqnarray}\label{A(x,y)}
A(x,y)=\frac{1}{1-x-xy}=\frac{1}{1-x}\sum_{n\geq 0}\left(\frac{x}{1-x}\right)^ny^n\ ,
\end{eqnarray}

If $B^{(h)}(x,y)$ denotes the generating function of the paths $\beta^{(h)}$ (where $x$ and $y$ have the same role as in $A(x,y)$), using essentially the same argument as above, we have
$$
B^{(h)}(x,y)=\sum_{i=0}^h \left( \left[y^{h-i}\right] A(x,y)\right) y^i ;
$$
hence, in terms of generating functions, relation (\ref{Q_CPU}) becomes:

	\begin{align}\label{CPU}
	\Gamma_{qU}(x,y)&=
	\left(
	\sum_{h\geq0}\left( \left[ y^h \right] \Gamma_q (x,y)\right) B^{(h)}(x,y)\right)xy \\
	&=\left(
	\sum_{h\geq0}\left( \left[y^h\right]\Gamma_q (x,y)\right) \sum_{i=0}^h\left( \left[y^{h-i}\right]A(x,y)\right) y^i\right)xy\ \nonumber.
	\end{align}

\noindent
We note that, referring to (\ref{Q_CPU}), the term $\displaystyle \sum_{h\geq0}\left[y^h\right]\Gamma_q (x,y)$ in (\ref{CPU}) records the prefix $\pi'$, while the term $xy$ tracks the step $U$.
By using (\ref{A(x,y)}) for the coefficient $\left[y^{h-i}\right]A(x,y)$, expression (\ref{CPU}) can be reduced to (\ref{C_qU}).

\bigskip

Similarly, let $\pi \in \minco{qH}$ and let $\pi '$ be the smallest prefix of $\pi$ containing $q$.~We denote with $\delta ^{(h)}$ a Dyck factor starting at height $h$ (i.e., a sequence of $U$ and $D$ steps which does not cross the $x$-axis) and with $D ^{(h)}(x,y)$ the bivariate generating function for such paths. We have:
$$
\pi =\pi' \delta^{(h)}H\ .
$$
Therefore,
\begin{equation}\label{C_PH(x,y)}
\Gamma_{qH}(x,y)=\left( \sum_{h\geq 0}
\left(
\left[y^h\right]\Gamma_q(x,y)\right) D^{(h)}(x,y)
\right)
x \ .
\end{equation}
As far as $D^{(h)}(x,y)$ is concerned, denoting with $\gamma$ a generic Dyck prefix and with $\gamma_i$ a Dyck path, we observe that a Dyck factor $\delta^{(h)}$ can be factorized as $\delta^{(h)}=(\gamma_1 D)(\gamma_2 D)\ldots (\gamma_r D)\gamma$, with $0\leq r\leq h$, where the first $D$ step reaching height $h-i$ is highlighted, for each $i=1,2,\ldots, r$. 
From the above construction, in terms of generating functions we have:

\begin{eqnarray*}
	D^{(h)}(x,y)&=&\mathcal{DP}(x,y)y^h+ C(x) x\mathcal{DP}(x,y)y^{h-1}+\\
	&&\\
	&&C(x)^2 x^2\mathcal{DP}(x,y)y^{h-2}+
	\cdots
	+C(x)^h x^h\mathcal{DP}(x,y)y^{h-h}\\
	&&\\
	&=&\mathcal{DP}(x,y)\sum_{i=0}^hx^iy^{h-i}C(x)^i \ ,
\end{eqnarray*}
leading to
\begin{eqnarray}\label{D_F(x.y)}
D^{(h)}(x,y)
&
=
&
\frac{2}{1-2xy+\sqrt{1-4x^2}}\cdot
\frac{y^{h+1}-x^{h+1}C(x)^{h+1}}{y-xC(x)}\ .
\end{eqnarray}
Plugging (\ref{D_F(x.y)}) into (\ref{C_PH(x,y)}) we obtain:

\begin{equation*}
\Gamma_{qH}(x,y)=
\frac{2x}{\left(1-2xy+\sqrt{1-4x^2}\right)\left(y-xC(x)\right)}\sum_{h\geq 0}
\left( \left[
y^h
\right]
\Gamma_q(x,y)\right)
\left(
y^{h+1}-x^{h+1}C(x)^{h+1}(x)
\right)
\end{equation*}
which 
boils down to (\ref{C_qH}).

\bigskip

Finally, let  $\pi \in \minco{qD}$ and let $\pi'$ be the smallest prefix of $\pi$ containing $q$.~Then
$$
\pi =\pi' \alpha D
$$
where $\alpha$ is, as before, a path starting at the origin, using only $U$ and $H$ steps,
and with the additional restriction that, if $\pi'$ ends at height $h=0$, then
$\alpha \neq H^r$, $r\geq 0$ (otherwise $\pi$ would not be a Motzkin prefix, since it would terminate below the $x$-axis). In terms of generating functions, recalling the expression (\ref{A(x,y)}) for the bivariate generating function of the paths $\alpha$, we then have:

\begin{eqnarray*}
	\Gamma_{qD}(x,y)&=&\left(\sum_{h\geq 0}\left( \left[y^h\right]\Gamma_q(x,y)\right) y^hA(x,y)\right)xy^{-1}
	-
	\left( \left[y^0\right]\Gamma_q(x,y)\right)
	\sum_{i\geq 0}x^{i}xy^{-1}
	\\
	&&
	\\
	&=&\frac{x}{y}A(x,y)\Gamma_q (x,y)-\frac{x}{y}\Gamma _q(x,0)\frac{1}{1-x}\\
	&&\\
	&=&\frac{x}{y}\left(
	\frac{1}{1-x-xy}\Gamma_q (x,y)-\frac{1}{1-x}\Gamma_q (x,0)\
	\right) ,
\end{eqnarray*}
which is equal to (\ref{C_qD}).
\end{proof}

Let $\Delta_q(x)$ be the generating function of Motzkin paths avoiding a Motzkin prefix $q$ with respect to the length.
The following result gives a recursive procedure to compute $\Delta_q (x)$
.

\begin{proposition}\label{genfun}
	For any Motzkin prefix $q$, the generating function $\Delta_q(x)$ is given by:
	
	\begin{align}
	\Delta_{\epsilon}(x)&=0\\
	\label{D_qD}
	\Delta_{qD}(x)&=\Delta_q(x)+\Gamma_q (x,0)\frac{1}{1-x}\\
	\label{D_qH}
	\Delta_{qH}(x)&=\Delta_q(x)+C(x)\cdot \Gamma_q \left(x,xC(x)\right)\\
	\label{D_qU}
	\Delta_{qU}(x)&=\Delta_q(x)+\frac{1}{1-x}\Gamma_q \left(x,\frac{x}{1-x}\right)\ .
	\end{align}	
\end{proposition}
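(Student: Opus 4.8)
The plan is to decompose a Motzkin path avoiding $q$ according to its last step, and to recognize that passing to paths avoiding $qX$ (for $X \in \{U, H, D\}$) only adds those Motzkin paths that avoid $q$ but, when extended in a suitable way, would create an occurrence of $qX$. More precisely, for each final step $X$ I would write a Motzkin path avoiding $qX$ as one of two types: those that already avoid $q$, and those that contain $q$ but for which the containment is ``fresh enough'' that appending the prefix leading to the new step is still blocked. The empty path avoids no nonempty prefix, and it trivially avoids $\epsilon$ only vacuously; since $q = \epsilon$ is contained in every path, $\av{\epsilon} = \emptyset$ and $\Delta_{\epsilon}(x) = 0$, which is the base case.

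For the inductive step I would argue as follows. A Motzkin path $p$ avoids $qD$ if and only if either $p$ avoids $q$, or $p$ contains $q$ but in a way that ``leaves no room'' for a final $D$ — concretely, $p$ can be built by taking a smallest prefix $\pi'$ containing $q$ and then continuing with a suffix that never lets the would-be occurrence of $q$ be completed into $qD$. Here the combinatorics of Proposition~\ref{Prop_C_P} is exactly what I need: in the analysis of $\minco{qD}$ there, the continuation after $\pi'$ was a path $\alpha$ using only $U$ and $H$ steps followed by a closing $D$; to get a \emph{Motzkin path} (ending at height $0$) avoiding $qD$, I instead want $\pi'$ followed by a run of $H$ steps that brings it back to the $x$-axis without ever using a $D$ — which is only possible when $\pi'$ already ends at height $0$. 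That forces the ``extra'' contribution to be $\Gamma_q(x,0) \cdot \frac{1}{1-x}$, the first factor selecting smallest $q$-containing prefixes ending at height $0$ and the second appending an arbitrary run of $H$'s. Adding $\Delta_q(x)$ for the paths that avoid $q$ outright gives~\eqref{D_qD}. The cases $qH$ and $qU$ run in parallel: for $qH$, a $q$-containing path avoiding $qH$ is a smallest $q$-containing prefix $\pi'$ followed by a suffix that descends to height $0$ using only $U$ and $D$ steps (a Dyck factor), contributing $\Gamma_q(x, xC(x)) \cdot C(x)$ — the substitution $y \mapsto xC(x)$ is precisely the one that appeared when closing a Dyck factor to the axis in the proof of~\eqref{C_qH}; for $qU$, the suffix must use only $H$ and $D$ steps (so that no $q$ is completed by a subsequent $U$) and return to height $0$, which by the computation of $A(x,y)$ and its role in~\eqref{C_qU} contributes $\frac{1}{1-x}\Gamma_q\left(x, \frac{x}{1-x}\right)$ after the substitution $y \mapsto \frac{x}{1-x}$.

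The key point making all three formulas correct is the disjointness: a Motzkin path avoiding $qX$ either avoids $q$ (counted by $\Delta_q$) or has a \emph{unique} smallest prefix containing $q$, after which the remainder is constrained exactly as described; since ``avoids $q$'' and ``contains $q$'' partition $\M$, the two contributions do not overlap, and we get $\Delta_{qX}(x) = \Delta_q(x) + (\text{correction})$ in each case. I would also double-check the boundary behaviour of the substitutions $y = xC(x)$, $y = \frac{x}{1-x}$, $y = 0$: these are exactly the specialisations used inside Proposition~\ref{Prop_C_P}, so the resulting expressions are rational in $x$ and $C(x)$ whenever $\Gamma_q$ is, which will feed into the proof of Theorem~\ref{thm:motzkinarealmostrational}.

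The main obstacle I expect is justifying carefully that, after fixing the smallest $q$-containing prefix $\pi'$, the admissible continuations are \emph{precisely} the height-returning paths in the restricted step set (no $D$ for the $qU$ case, no $U$ and no $H$ beyond the final step for the $qD$ case, only $U$ and $D$ for the $qH$ case), with no further interaction between $\pi'$ and the continuation that could create a spurious occurrence of $qX$ straddling the two pieces. This is the same subtlety handled in Proposition~\ref{Prop_C_P}, so I would lean on that argument; the mild new wrinkle is the Motzkin (height-$0$) endpoint condition, which only bites in the $qD$ case and is what produces the $\Gamma_q(x,0)$ rather than a full $\Gamma_q(x,y)$ there.
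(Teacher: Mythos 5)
Your proposal is correct and follows essentially the same route as the paper: split $\av{qX}$ into the paths avoiding $q$ (giving $\Delta_q$) and those containing $q$, factor the latter through the unique smallest $q$-containing prefix $\pi'$, and observe that the suffix after $\pi'$ must contain no $X$ step at all, which forces it to be a run of $H$'s from height $0$ (case $qD$), a $\{U,D\}$-path returning to the axis (case $qH$), or an $\{H,D\}$-path returning to the axis (case $qU$), yielding exactly the substitutions $y=0$, $y=xC(x)$, $y=\tfrac{x}{1-x}$ into $\Gamma_q$. The paper carries this out with the same generating functions $A(x,y)$ and $\mathcal{DP}(x,y)$ from Proposition~\ref{Prop_C_P} that you invoke.
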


\begin{proof}
Every Motzkin path contains the empty path $\epsilon$, hence $\Delta_{\epsilon}(x)=0$.~Let $\pi \in \av{qD}$.~There are two cases: either $\pi$ avoids $q$, and such paths $\pi$ are counted by $\Delta_q(x)$, or $\pi$ contains $q$ but avoids $qD$.~In the latter case, let $\pi'$ be the smallest prefix of $\pi$ containing $q$.
Obviously $\pi'$ cannot be followed by any $D$ step in any position, otherwise the path $\pi$ would contain $qD$.~Hence the only possibility is that $\pi'$ has final height equal to $0$ and is followed by a certain number of consecutive $H$ steps. In other words, $\pi$ can be factorized as
$$
\pi =\pi'H^i\ ,
$$
with $i\geq 0$. In terms of generating functions, the above argument leads to:

$$
\Delta_{qD}(x)=\Delta_q(x)+\left( \left[y^0\right]\Gamma_q (x,y)\right) \sum_{i\geq 0}x^{i}
=
\Delta_q(x)+\Gamma_q (x,0)\frac{1}{1-x}
$$
which is equation (\ref{D_qD}).

\bigskip
Suppose that $\pi \in \av{qH}$.~If $\pi$ also avoids $q$, then, as in the previous case, we obtain the generating function $\Delta_q(x)$.~Otherwise, $\pi$ can be decomposed as its smallest prefix $\pi'$ containing $q$, ending at height $h\geq 0$, followed by a path starting from height $h$, using only $U$ and $D$ steps and ending on the $x$-axis. This path is easily seen to be the reverse of a Dyck prefix having final height $h$, hence:
\begin{equation}\label{pippo}
\Delta_{qH}(x)=\Delta_q(x)+
\sum_{h\geq 0}\left(\left[y^h\right]\Gamma_q (x,y)\right)
\left( \left[y^h\right]\mathcal{DP}(x,y)\right)\ .
\end{equation}
Since we have
$$
\left[y^h\right]\mathcal{DP}(x,y)=
\frac{2}{1+\sqrt{1-4x^2}}\left(\frac{2x}{1+\sqrt{1-4x^2}}\right)^h\ ,$$
plugging the above expression into (\ref{pippo}), and observing that $\frac{2}{1+\sqrt{1-4x^2}}=C(x)$, we get:
$$
\Delta_{qH}(x)=\Delta_q(x)+C(x) \sum_{h\geq 0}\left( \left[y^h\right]\Gamma_q (x,y)\right)
\left(
xC(x)
\right)^h
$$
which is equivalent to (\ref{D_qH}).

\bigskip
Finally, let $\pi \in \av{qU}$.~If $\pi$ contains $q$, as usual let $\pi'$ be the smallest prefix of $\pi$ containing $q$.~ The path $\pi$ can be written as $\pi'$, which ends at height $h\geq 0$, followed by a path starting from height $h$ and using only $D$ and $H$ steps. This latter path is the reverse of a path $\alpha$ starting at the origin, using only $U$ steps and $H$ steps and ending at height $h$.~Recalling once more the expression (\ref{A(x,y)}) of the bivariate generating function $A(x,y)$ of such paths, we obtain:
$$
\Delta_{qU}(x)=\Delta_q(x)+\sum_{h\geq 0}
\left(
\left[
y^h
\right]
\Gamma_q (x,y)\right)
\left( \left[
y^h
\right]
A(x,y)
\right)\ .
$$
Since $\left[
y^h
\right]
A(x,y)
=\frac{1}{1-x}
\left(
\frac{x}{1-x}
\right)^h
$
, we get:
$$
\Delta_{qU}(x)=\Delta_q(x)+
\frac{1}{1-x}
\sum_{h\geq 0}
\left(
\left[
y^h
\right]
\Gamma_q (x,y)\right)
\left(
\frac{x}{1-x}
\right)^h
\ ,
$$
which is (\ref{D_qU}).
\end{proof}

As a consequence of Propositions \ref{Prop_C_P} and \ref{genfun}, we get that,
for a given pattern $q$, 
the generating function $\Delta_q (x)$ of Motzkin paths avoiding $q$ is rational over $x$ and $C(x)$,
which is the statement of Theorem \ref{thm:motzkinarealmostrational}.

\section{Conclusion}
\label{sec:conclusion}

The main results of the present paper, 
namely an algorithm to determine the generating function of Motzkin paths avoiding a set of patterns, 
and the proof that such generating function is rational over $x$ and $C(x)$ 
(at least in the case of a single pattern),
may be seen as a further step towards a deeper investigation of pattern avoidance in lattice paths. 
For instance, 
the same approach developed here can be pursued for Schr\"oder paths, for which pattern avoidance has been first studied in Cioni and Ferrari~\cite{cioniferrari:schroder}, 
thus getting completely analogous results 
(in particular, the same technique described in Section \ref{sec:proof} can be exploited to show that 
the generating function of Schr\"oder paths avoiding a single pattern is also rational over $x$ and $C(x)$).
It would then be interesting to find analogous results in the case of an arbitrary set of steps.

Another issue that seems worth investigation is the asymptotic behavior of classes of pattern-avoiding Motzkin paths. 
In the case of Dyck paths, 
in Bacher et al.~\cite{bacheretal:dyck} it is shown that, 
regardless of the specific pattern to be avoided, the asymptotic behavior of all classes of Dyck paths avoiding a single pattern is the same (and it is polynomial).
Having a similar result for pattern-avoiding Motzkin paths would be desirable.

There are some papers, such as Asinowki et al.~\cite{asinowskietal:kernel} and Asinowki et al.~\cite{asinowskietal:pattern}, which delevop a methodology based on automata and a variant of the kernel method to study lattice paths avoiding a \emph{consecutive} pattern (that is a pattern whose element are adjacent in the path). It seems conceivable that a similar approach could be fruitful also in the case of generic patterns.  

\bigskip

%
%
%
%
%

\bibliography{mybibfile}

\end{document}